\documentclass[12pt]{amsart}
\usepackage{amsmath}
\usepackage{mathtools}

\usepackage{amsfonts}
\usepackage{amsthm}
\usepackage{autobreak}
\usepackage[english]{babel}
\usepackage{bold-extra}
\usepackage{hyperref}
\usepackage{comment}
\usepackage{filecontents}
\usepackage{mdwlist}
\theoremstyle{definition}
\newtheorem{definition}{Definition}

\newtheorem{remark}[definition]{Remark}
\theoremstyle{plain}
\newtheorem{prop}[definition]{Proposition}

\newtheorem{lemma}[definition]{Lemma}
\newtheorem{theorem}[definition]{Theorem}

\numberwithin{definition}{section}
\allowdisplaybreaks

\def\D{{\mathbb{D}}}

\def\R{{\mathbb{R}}}
\def\C{{\mathbb{C}}}

\def\N{{\mathbb{N}}}

\numberwithin{equation}{section} 
\numberwithin{figure}{section} 
\numberwithin{table}{section} 

\usepackage{multicol}
\usepackage[T1]{fontenc}
\usepackage[utf8]{inputenc}
\usepackage[a4paper]{geometry}
\usepackage{amssymb,amsmath}
\usepackage{version}
\usepackage{mathtools} 
\usepackage{enumerate}
\usepackage{dsfont}
\usepackage{mathdots}
\makeatletter

\usepackage{latexsym}
\usepackage{color}
\definecolor{marron}{rgb}{0.55,0.27,0.07}
\begin{document}

\title[Simultaneous approximation via Blaskche products]{Simultaneous polynomial approximation in Beurling-Sobolev spaces via Blaschke products}


\author{St\'{e}phane Charpentier}
\address{Aix-Marseille Univ, CNRS, I2M, Marseille, France}
\email{stephane.charpentier.1@univ-amu.fr}

\author{Nicolas Espoullier}
\address{Aix-Marseille Univ, CNRS, I2M, Marseille, France}
\email{nicolas.espoullier@etu.univ-amu.fr}

\author{Rachid Zarouf}
\address{Aix Marseille Univ, Lab ADEF, Campus Univ St Jerome, 52 Ave Escadrille Normandie, F-13013 Marseille, France}
\address{CPT, Aix-Marseille Univ., Université de Toulon, Marseille, France}
\email{rachid.zarouf@univ-amu.fr}

\thanks{The first author is supported by the grant ANR-24-CE40-0892-01 of the French National Research Agency
	ANR (project COMOP). The work of Rachid Zarouf is supported by the pilot center Ampiric, funded by the France 2030 Investment Program operated by the Caisse des D\'ep\^ots.}

\subjclass[2020]{30J10, 30E10, 30B30}




\begin{abstract}
	Assuming that $\phi(t)=o(t^2)$ as $t\to0$, we establish a lemma on simultaneous polynomial approximation in Orlicz--Beurling--Sobolev spaces $\ell_a^{\phi}$. These spaces, endowed with the Luxemburg norm $\Vert \cdot \Vert _{\ell^{\phi}}$, generalize the classical Beurling--Sobolev spaces $\ell_a^p$ for $p>2$. More precisely, we prove that for every $\varepsilon>0$, every $v\in\mathbb{N}$ and every function $\varphi$ continuous on $\partial\mathbb{D}$, there exist a polynomial $P(z)=\sum_{k=v}^d a_k z^k$ and a compact set $K\subset\partial\mathbb{D}$ with $m(K)>1-\varepsilon$ such that
	\[
	\|P\|_{\ell^{\phi}}\le\varepsilon \quad  \text{and}\quad  \|P-\varphi\|_K\le\varepsilon.
	\]
	The proof relies on a result of independent interest describing the asymptotic behaviour of the Luxemburg norm $\|B^k\|_{\ell^{\phi}}$ of powers of a finite Blaschke product $B$ which is not a monomial. This behaviour is governed by the comparison between $\phi(t)$ and $t^2$ near $0$:  the norms remain bounded when $\phi\asymp t^2$, tend to $0$ when $\phi=o(t^2)$, and diverge to $+\infty$ when $t^2=o(\phi(t))$. A key ingredient in the proof is the qualitative limit $\sup_{j\ge0}|\widehat{B^k}(j)|\to0$ as $k\to\infty$.
	
	As an application of the simultaneous approximation lemma, we derive the existence of functions in $\ell_a^{\phi}$ with universal properties, including Menshov universality of Taylor partial sums and universality with respect to radial boundary limits.
\end{abstract}

\maketitle

\section{Introduction}

Let $\phi: \R_+\to \R_+$ be an Orlicz function, that is an increasing convex function with $\phi(0)=0$. The Orlicz space $\ell_a^{\phi}$ consists of all functions $f$ analytic in the unit disc $\D=\{z\in \C:\, |z|<1\}$ such that
\[
\sum_k \phi\left(\frac{|\hat{f}(k)|}{\lambda}\right)<+\infty,
\]
for some $\lambda >0$, where $\hat{f}(k)$ denotes the $k$-th Fourier coefficient of $f$. Endowed with the Luxemburg norm,
\[
\Vert f\Vert _{\ell^{\phi}}:=\inf\left\{\lambda>0:\,\sum_k \phi\left(\frac{|\hat{f}(k)|}{\lambda}\right)\leq 1\right\},
\]
the space $\ell_a^{\phi}$ is a Banach space. The set of all polynomials is dense in $\ell_a^{\phi}$ if and only if $\phi$ satisfies the $\Delta_2$ condition, namely if there exist constants $K>0$ and $t_0\in (0,1)$ such that $\phi(2t)\leq K\phi(t)$ for all $t\in (0,t_0)$. We refer to \cite{LindenstraussTzafririBookI} for basic properties of Orlicz sequence spaces. Typical examples are given by the Beurling-Sobolev spaces $\ell_a^p$, $p\geq 1$, corresponding to the choice $\phi(t)=t^p$. In this case, $\ell_a^p$ consists of all analytic functions $f$ on $\D$ such that
\[
\Vert f \Vert _{\ell^p}^p:=\sum_k |\hat{f}(k)|^p<+\infty.
\]
The space of all analytic functions $f$ on $\D$ with bounded Fourier coefficients, equipped with the norm
\[
\Vert f \Vert _{\ell^{\infty}}:=\sup_k |\hat{f}(k)|<+\infty,
\]
is denoted by $\ell_a^{\infty}$.

The purpose of this note is to establish a result of simultaneous polynomial approximation in $\ell_a^{\phi}$, under the additional assumption
\[
\phi(t)=o(t^2)\quad \text{as }t\to 0,
\]
using Blaschke products. Given a compact set $K\subset \partial \D$ and a continuous function $h$ on $K$, we write
\[
\Vert h \Vert _K:=\sup_{z\in K}|h(z)|.
\]
\begin{lemma}[Simultaneous approximation in $\ell_a^{\phi}$]\label{lemma1}Let $\phi$ be an Orlicz function such that $\phi(t)=o(t^2)$ as $t\to 0$. For every $\varepsilon>0$, every $v\in \N$ and every continuous function $\varphi$ on $\partial \D$, there exist a polynomial $P=\sum_{k=v}^d a_k z^k$ and a compact set $K\subset \partial \D$, with $m(K)>1-\varepsilon$, such that
	\[
	\Vert P \Vert_{\ell^{\phi}}\leq \varepsilon \quad \text{and}\quad \Vert P-\varphi\Vert_K \leq \varepsilon.
	\]
\end{lemma}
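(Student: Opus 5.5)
We plan to build $P$ from a polynomial approximating $\varphi$, multiplied by a high power of a finite Blaschke product that is close to $1$ on most of the circle, and then truncate. The key input is the result announced in the abstract: if $B$ is a finite Blaschke product which is not a monomial and $\phi(t)=o(t^2)$, then $\|B^k\|_{\ell^\phi}\to0$ as $k\to\infty$. This rests on the facts that $\|B^k\|_{\ell^2}=1$ and $\sup_j|\widehat{B^k}(j)|\to0$. Since $\phi(t)\le\eta(\delta)t^2$ for $t\le\delta$, with $\eta(\delta)\to0$ as $\delta\to0$, we get $\sum_j\phi(|\widehat{B^k}(j)|/\lambda)\le \eta\,\lambda^{-2}$ once $\sup_j|\widehat{B^k}(j)|\le\lambda\delta$. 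We take this as available.

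\emph{Step 1.} Since $z^{-v}\varphi$ is continuous, Fejér's theorem gives a trigonometric polynomial uniformly close to it. We cannot use this directly, because $P$ must be analytic with $\hat P(k)=0$ for $k<v$. Instead we choose a finite Blaschke product $B$, not a monomial, with $B(0)=0$ and $B$ close to $1$ on a large arc. For instance, take $B(z)=z\,b_a(z)^N$, where $b_a$ is the automorphism $(a-z)/(1-\bar az)$ with $a$ near $1$, suitably rotated. The functions $b_a$ map most of the circle near a single point, so $B$ is uniformly close to a unimodular constant $c$ off a small arc $J$.

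\emph{Step 2.} This is the reduction to an analytic approximant. Write $\varphi\approx Q+\overline{R}$, with $Q,R$ analytic polynomials, uniformly on $\partial\D$. On $\partial\D$ we have $\overline{B^n}=1/B^n$, and $\overline{R}\,\overline{c}^{-n}B^{n}$ should approximate $\overline R$ off $J$. To make this analytic, multiply $\overline{R}$ by $B^m$ with $m\ge\deg R$ and $B$ vanishing at $0$ to order $\ge\deg R$; then $z^{\deg R}\overline{R(z)}$ is a polynomial. The cleanest choice is
\[
P=\bar c^{\,m}B^m\,(Q_0+\overline{R}),\qquad Q_0=z^{v}Q,
\]
where $B$ has a zero of order at least $v+\deg R$ at $0$, so that $P$ is an analytic polynomial vanishing to order $v$. (Earlier we approximated $z^{-v}\varphi$, so the factor $z^v$ is absorbed by changing $c$.) On $K=\partial\D\setminus J$ we have $|\bar c^mB^m-1|$ small, hence $\|P-\varphi\|_K\le\varepsilon$ with $m(K)>1-\varepsilon$. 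This requires $m|B-c|$ small on $K$, which conflicts with letting $m\to\infty$. The fix is to keep $m$ fixed and use a second, independent Blaschke product to get smallness.

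\emph{Step 3.} This is the main obstacle: making $\|P\|_{\ell^\phi}$ small. We fix $g=\bar c^mB^m(Q_0+\overline R)$ as above, an analytic polynomial with $\|g-\varphi\|_K$ small. We then set $P=g\cdot C^k$, where $C$ is a finite Blaschke product, not a monomial, with $|C-1|$ tiny on $K$ except on a further set of small measure, and with $C^k\approx1$ there for the chosen $k$. Since $\|fh\|_{\ell^\phi}\le\|f\|_{\ell^1}\|h\|_{\ell^\phi}$ (Young's inequality for convolution in Orlicz spaces, using convexity), we have $\|P\|_{\ell^\phi}\le\|g\|_{\ell^1}\|C^k\|_{\ell^\phi}$, which tends to $0$ as $k\to\infty$. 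The difficulty is arranging $C^k\approx1$ on a large set while $k\to\infty$. We handle it by taking $C=C_k$ depending on $k$, for example $C_k=b_{a_k}\circ$(a rotation) composed so that $C_k^k$ is close to $1$ off an arc of measure below $\varepsilon/2$, while retaining the quantitative decay $\sup_j|\widehat{C_k^k}(j)|\to0$. This requires a uniform version of the Blaschke-power estimate, and verifying that uniformity is the point we expect to need the most care.
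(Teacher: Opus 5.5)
Your proposal has a genuine gap. It is structural, not a matter of uniform estimates. Every factor you want to be ``close to $1$ on most of the circle'' is bounded by $1$ (indeed inner), and such a function cannot also have small mean or small Fourier coefficients. Precisely, if $|h|\le1$ on $\partial\D$ and $|h-1|\le\delta$ on a set $E$ with $m(E)>1-\varepsilon$, then
\[
|\hat h(0)-1|=\Bigl|\int_{\partial\D}(h-1)\,dm\Bigr|\le\delta+2\varepsilon .
\]
In Step 2 the inner function $\bar c^{\,m}B^m$ vanishes at $0$, since you need $B(0)=0$ for analyticity and for the order-$v$ zero. So its mean is $0$, and it cannot be within $\delta$ of $1$ on a set $K$ with $m(K)>1-\varepsilon$. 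Equivalently, an inner function vanishing at $0$ pushes Lebesgue measure forward to Lebesgue measure, so it is close to a given unimodular constant only on a set of small measure. Your concrete choice $z\,b_a^N$ is in any case close to $z$, not to a constant, off a small arc.

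Step 3 fails for the same reason. If $C_k^k\approx1$ on a set of measure $>1-\varepsilon/2$, then $|\widehat{C_k^k}(0)|\ge 1-\delta-\varepsilon$. This contradicts $\sup_j|\widehat{C_k^k}(j)|\to0$. It also keeps $\|C_k^k\|_{\ell^\phi}$ bounded below, since $\phi(|\widehat{C_k^k}(0)|/\lambda)\le1$ for every $\lambda>\|C_k^k\|_{\ell^\phi}$. No choice of $C_k$ and no uniform Blaschke-power estimate can repair this.

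The missing idea is to use composition, and to let the factor that approximates $1$ be large on the small exceptional set.

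\begin{enumerate}
\item Fix an arc $I$ with $m(I)>1-\varepsilon$. Since $\C\setminus I$ is connected, Mergelyan's theorem gives a polynomial $R$ with $\|R-\varphi\|_I\le\varepsilon$. This way the anti-analytic part $\overline R$ never appears.
\item Mergelyan also gives a polynomial $Q$ with $\mathrm{val}(Q)\ge\max(v,1)$ and $\|Q-1\|_I$ as small as you like: approximate $z^{-v}$ on $I$ and multiply by $z^v$. Such a $Q$ has mean $0$, so it is necessarily large on $\partial\D\setminus I$.
\item Take a finite Blaschke product $B$ with $B(0)=0$, not a monomial, and set $B_0=B^n$. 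Write $Q=\sum_{k\ge1}q_kz^k$. The triangle inequality and Assertion~2 of Theorem~\ref{prop:lp_norms} give $\|Q\circ B_0\|_{\ell^\phi}\le\sum_k|q_k|\,\|B^{nk}\|_{\ell^\phi}\to0$.
\item The measure preservation that defeated your Step 2 now works in your favour. The set $K=B_0^{-1}(I)$ satisfies $m(K)=m(I)$, and $Q\circ B_0\approx1$ on $K$.
\item Set $g=R\cdot(Q\circ B_0)$. It vanishes to order $v$ at $0$. Your Young-type inequality is exactly the right tool here: $\|g\|_{\ell^\phi}\le\|R\|_{\ell^1}\|Q\circ B_0\|_{\ell^\phi}$.
\item On $K\cap I$, a set of measure $>1-2\varepsilon$, we have $|g-\varphi|\le\varepsilon+\|R\|_I\|Q-1\|_I$.
\item A Taylor truncation of a dilate $g_r$ then yields the polynomial $P$.
\end{enumerate}
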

Up to minor differences, this result was already formulated by Kahane and Nestoridis in \cite{KahaneNestoridis2000}. Their proof is essentially based on another approximation result, obtained in \cite{KahaneKatznelson1971} with a probabilistic approach. The interest of our proof lies in illustrating the general principle that such approximation result can be obtained in various settings by means of suitable inner functions. This viewpoint was recently developed in \cite{Limani2024} and further emphasized in \cite{CharpentierEspoullierZarouf2025}.

Simultaneous polynomial approximation has been investigated in a wide range of Fréchet or Banach spaces of analytic functions, including the Fréchet space $H(\D)$ of all analytic functions in $\D$ (where the result reduces to a simple variant of Mergelyan's theorem), growth spaces \cite{Khrushchev1978}, Bergman, Hardy and Dirichlet spaces \cite{BeiseMuller2016,Maronikolakis2022}, weighted $\ell_a^1$ spaces \cite{Charpentier2020},  and the Bloch space \cite{Limani2024}. Versions in several complex variables can be found in \cite{Bayart2005,CharpentierEspoullierZarouf2025}.

More precisely, Lemma \ref{lemma1} will be derived as a consequence of the following result, which is of independent interest. Recall that, given a finite set $\sigma=\{\lambda_1,\ldots,\lambda_m\} \subset \D$, the associated finite Blaschke product $B$ of degree $m$ is defined by
\[
B(z)=\prod_{j=1}^m b_{\lambda_j},
\]
where $b_{\lambda_j}:=\frac{\lambda_j}{|\lambda_j|}\frac{\lambda_j-z}{1-\bar{\lambda_j}z}$ denotes the Blaschke factor associated with $\lambda_j$. If $f$ and $g$ are two positive functions, we write $f\asymp g$  as $t\to 0$ if there exist constants $c,C>0$ and $t_0\in (0,1)$ such that $cf(t)\leq g(t) \leq Cf(t)$ for all $t\in (0,t_0)$.

\begin{theorem}\label{prop:lp_norms}Let $B$ be a finite Blaschke product that is not a power of $z$.
	\begin{enumerate}\item If $\phi \asymp t^2$  as $t\to 0$, then $\Vert B^k \Vert _{\ell^\phi} \asymp 1$, with constants independent of $k \in \N$ (and equality when $\phi(t)=t^2$);
		\item If $\phi(t)=o(t^2)$ as $t\to 0$, then $\Vert B^k \Vert _{\ell^\phi} \to 0$ as $k\to \infty$;
		\item If $t^2 =o(\phi(t))$ as $t\to 0$, then $\Vert B^k \Vert _{\ell^\phi} \to \infty$ as $k\to \infty$.
	\end{enumerate}
\end{theorem}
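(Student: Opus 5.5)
The proof will rest on two facts about the Taylor coefficients $c_j^{(k)}:=\widehat{B^k}(j)$. The first is Parseval: since $|B^k|=1$ on $\partial\D$, we have $\sum_j |c_j^{(k)}|^2=\Vert B^k\Vert_{H^2}^2=1$ for every $k$. The second is the qualitative decay $\varepsilon_k:=\sup_{j\ge 0}|c_j^{(k)}|\to 0$ as $k\to\infty$, which is exactly where the hypothesis that $B$ is not a monomial enters.

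To prove the decay, I would write $c_j^{(k)}=\frac{1}{2\pi}\int_0^{2\pi} e^{ik\,\theta_B(t)-ijt}\,dt$ up to a unimodular constant, where $B(e^{it})=e^{i\theta_B(t)}$. Here $\theta_B'(t)=\sum_l P_{\lambda_l}(t)>0$ is a sum of Poisson kernels, and it is nonconstant unless all $\lambda_l=0$. The phase is then $\Phi(t)=k\theta_B(t)-jt$, with $\Phi''=k\theta_B''$. The nonconstant real-analytic function $\theta_B''$ has finitely many zeros, and at each of them some higher derivative $\theta_B^{(r)}$ with $r\ge 3$ is nonzero. A van der Corput estimate, uniform in the linear term $jt$ since subtracting a linear function does not change derivatives of order $\ge 2$, then gives $|c_j^{(k)}|\lesssim k^{-1/R}$ uniformly in $j$, where $R$ is the maximal vanishing order. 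This stationary-phase step is the main obstacle, because the bound must hold uniformly in $j$. An alternative is a weak-compactness argument: if $|c_{j_k}^{(k)}|\ge\delta$ along a subsequence, pass to a limit of the measures $(\theta_B)_*$ and show this forces $\theta_B'$ to be constant. I would try van der Corput first.

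With these two facts, the three cases follow from estimating $S_k(\lambda):=\sum_j\phi(|c_j^{(k)}|/\lambda)$.

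Case (1). Suppose $a t^2\le\phi(t)\le A t^2$ for $t<t_0$. Convexity, together with $\phi(t)/t$ increasing, extends the lower bound to $\phi(t)\ge a' t^2$ on $[0,1]$ after adjusting constants. For $\lambda\ge\varepsilon_k/t_0$, and in particular for all $\lambda\ge 1/t_0$ since $\varepsilon_k\le 1$, we get $S_k(\lambda)\le A/\lambda^2$. Hence $\Vert B^k\Vert_{\ell^\phi}\le\max(\sqrt A,1/t_0)$. In the other direction, if $\lambda\le 1$ then every ratio $|c_j^{(k)}|/\lambda$ lying in $[0,1]$ satisfies $\phi\ge a'(\cdot)^2$, while any ratio exceeding $1$ already gives $\phi\ge\phi(1)$. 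So $S_k(\lambda)\ge\min(a'/\lambda^2,\phi(1))$, and the norm is bounded below by a constant such as $\min(\sqrt{a'},\ldots)$. When $\phi=t^2$ we get $S_k(\lambda)=1/\lambda^2$ exactly, so the norm equals $1$.

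Case (2). Fix $\lambda>0$ and $\eta>0$, and choose $t_0$ with $\phi(t)\le\eta t^2$ for $t<t_0$. For $k$ large we have $\varepsilon_k/\lambda<t_0$, so $S_k(\lambda)\le\eta/\lambda^2\le 1$ once $\eta\le\lambda^2$. Hence $\limsup_k\Vert B^k\Vert_{\ell^\phi}\le\lambda$ for every $\lambda>0$.

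Case (3). Fix $M$ and choose $t_0$ with $\phi(t)\ge M t^2$ for $t<t_0$. For any $\lambda\le 1$ and $k$ with $\varepsilon_k<\lambda t_0$, we get $S_k(\lambda)\ge M/\lambda^2\ge M>1$. Hence $\Vert B^k\Vert_{\ell^\phi}\ge 1$, and more sharply, choosing $\lambda<\sqrt M$ shows that the norm is at least about $\sqrt M$ for large $k$. Since $M$ is arbitrary, the norm tends to infinity. Throughout, the only nontrivial input is the uniform decay $\varepsilon_k\to 0$.
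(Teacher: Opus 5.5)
Your proposal follows essentially the paper's route. Parseval gives $\sum_j|\widehat{B^k}(j)|^2=1$, a van der Corput argument that is uniform in $j$ gives $\sup_j|\widehat{B^k}(j)|\to0$, and the three regimes then reduce to the same elementary comparisons of $\phi(t)$ with $t^2$. Your cases (2) and (3) are exactly the paper's arguments. The one difference is in the decay step. You apply higher-order van der Corput near the degenerate zeros of $\theta_B''$, which yields the Borichev--Fouchet--Zarouf rate $k^{-1/N}$; here $R$ should be read as the largest index of the first nonvanishing derivative $\theta_B^{(r)}$, not the vanishing order of $\theta_B''$. The paper uses only the second-derivative version away from small $\varepsilon$-neighbourhoods of those zeros and bounds the neighbourhoods trivially. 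That gives no rate, but the qualitative limit is all the theorem needs.

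One step in case (1) does not work as written. The inequality $S_k(\lambda)\ge\min(a'/\lambda^2,\phi(1))$ is true, but it yields no lower bound on the Luxemburg norm when $\phi(1)\le1$, for instance $\phi(t)=t^2/10$. So the ``$\ldots$'' in your constant cannot be filled in. There are two easy fixes.
\begin{enumerate}
\item Use the decay, as you already do in case (3); this is what the paper does. Fix $c<\sqrt a$ and take $k$ large enough that $\varepsilon_k<ct_0$. Then $S_k(c)\ge a/c^2>1$, hence $\Vert B^k\Vert_{\ell^\phi}\ge c$. The finitely many remaining $k$ are handled by shrinking the constant, since each norm is positive.
\item Avoid the decay altogether by replacing the threshold $1$ with some $T$ satisfying $\phi(T)>1$. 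Such a $T$ exists because $\phi$ is convex, increasing and not identically zero, hence unbounded. Since $\phi(t)/t$ is increasing, the quadratic lower bound extends to $\phi(t)\ge a''t^2$ on $[0,T]$. Splitting according to whether some ratio $|\widehat{B^k}(j)|/\lambda$ exceeds $T$ then gives $\Vert B^k\Vert_{\ell^\phi}\ge\sqrt{a''}$ for every $k$.
\end{enumerate}
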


Estimates of $\ell^p$-norms, $p\in [1,+\infty]$, of powers of Blaschke factors have been studied for a rather long time due to their numerous applications, for example to group algebras \cite{Kahane1956}, crystalline measures \cite{Meyer2021},  composition operators \cite{BlyudzeShimorin1996,LLQR2025,LLQRpreprint2025} and function theory \cite{BorichevFouchetZarouf2025I}. Sharp asymptotics of $\Vert b_{\lambda}^k\Vert_{\ell^p}$, $p\in [1,+\infty]$, were obtained in \cite{SzehrZarouf2020}. In \cite{BorichevFouchetZarouf2025II}, the asymptotic behaviour of $\Vert B^k \Vert _{\ell^{\infty}}$ for finite Blaschke products $B$ is investigated and applied to problems in matrix analysis related to Sch\"affer's question on norms of inverses. From this perspective, simultaneous polynomial approximation in $\ell_a^{\phi}$ can be viewed as a new application of this theory.

\medskip

As shown in \cite{KahaneNestoridis2000}, an application of a result akin to Lemma \ref{lemma1} implies that quasi-all functions in $\ell_a^{p}$ are Menshov universal. More precisely:

\begin{theorem}[\cite{KahaneNestoridis2000}]Let $p>2$. There exists a dense $G_{\delta}$-subset $\mathcal{U}_p$ of $\ell_a^p$ such that every function $f=\sum_ia_iz^i$ in $\mathcal{U}_p$ is Menshov universal, meaning that for any measurable function $\varphi$ on $\partial \D$, there exists an increasing sequence of integers $(d_n)_n$ such that
	\[
	\sum_{i=0}^{d_n}a_iz^i \to \varphi\quad \text{as }n\to \infty,\, \text{a.e. on } \partial \D.
	\]
\end{theorem}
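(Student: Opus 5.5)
We would derive the theorem from Lemma \ref{lemma1} applied with $\phi(t)=t^p$. Since $p>2$, we have $t^p=o(t^2)$ as $t\to0$, and the Luxemburg norm is then exactly $\Vert\cdot\Vert_{\ell^p}$. The argument is the usual Baire category scheme.

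\emph{Setting up the open sets.} Fix an enumeration $(Q_j)_{j\ge1}$ of all trigonometric polynomials on $\partial\D$ with coefficients in $\mathbb{Q}+i\mathbb{Q}$. For $f=\sum_i a_iz^i\in\ell_a^p$ write $S_df=\sum_{i\le d}a_iz^i$. For $j,s,n\in\N$ let $E(j,s,n)$ be the set of those $f\in\ell_a^p$ for which there exist $d\ge n$ and a compact set $K\subset\partial\D$ with
\[
m(K)>1-\tfrac1s \quad\text{and}\quad \Vert S_df-Q_j\Vert_K<\tfrac1s .
\]
Each $E(j,s,n)$ is open. Indeed, for fixed $d$ the map $f\mapsto S_df$ is continuous from $\ell_a^p$ into $C(\partial\D)$ with the sup norm, since $\Vert S_df-S_dg\Vert_\infty\le (d+1)\Vert f-g\Vert_{\ell^p}$. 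So the strict inequality persists under small perturbations of $f$, with the same $d$ and $K$. Set $\mathcal U_p=\bigcap_{j,s,n}E(j,s,n)$, which is a $G_\delta$ set.

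\emph{Density.} This is where Lemma \ref{lemma1} enters. Polynomials are dense in $\ell_a^p$, so it suffices to approximate a polynomial $g$. Take $g$ of degree $<v$ with $v\ge n$, and let $\varepsilon<1/s$. Apply Lemma \ref{lemma1} to the continuous function $Q_j-g$, with this $v$ and this $\varepsilon$. This gives $P=\sum_{k=v}^d a_kz^k$ and a compact set $K$ with $m(K)>1-\varepsilon$, $\Vert P\Vert_{\ell^p}\le\varepsilon$ and $\Vert P-(Q_j-g)\Vert_K\le\varepsilon$. Put $f=g+P$. Because the spectra of $g$ and $P$ are disjoint, $S_df=f$ with $d\ge v\ge n$. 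Hence $\Vert S_df-Q_j\Vert_K\le\varepsilon<1/s$, so $f\in E(j,s,n)$ and $\Vert f-g\Vert_{\ell^p}\le\varepsilon$. Thus each $E(j,s,n)$ is dense, and by Baire's theorem $\mathcal U_p$ is a dense $G_\delta$ subset of $\ell_a^p$.

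\emph{Universality.} Let $f\in\mathcal U_p$ and let $\varphi$ be measurable and finite a.e. on $\partial\D$. By Lusin's theorem and Fejér/Weierstrass approximation, trigonometric polynomials with rational coefficients are dense in measure. For each $s$ we therefore choose $j_s$ with
\[
m\bigl(\{|Q_{j_s}-\varphi|>2^{-s}\}\bigr)<2^{-s}.
\]
Next, using $f\in E(j_s,2^s,d_{s-1}+1)$, we pick $d_s>d_{s-1}$ and compact sets $K_s$ with $m(K_s)>1-2^{-s}$ and $\Vert S_{d_s}f-Q_{j_s}\Vert_{K_s}<2^{-s}$. Then
\[
m\bigl(\{|S_{d_s}f-\varphi|>2^{1-s}\}\bigr)<2^{1-s}.
\]
By Borel--Cantelli, almost every point lies in only finitely many of these sets, so $S_{d_s}f\to\varphi$ a.e.

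The whole weight of the proof lies in the density step, that is, in Lemma \ref{lemma1}, which may be assumed here. The point to watch is that the new polynomial $P$ must live on frequencies $\ge v$. This lets the partial sum $S_df$ of the perturbed function equal $g+P$ exactly, and lets $d$ be taken beyond any prescribed $n$. The remaining steps are routine measure theory.
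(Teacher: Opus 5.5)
Your proposal is correct and follows the route the paper indicates. The paper gives no separate proof of this theorem; it obtains it by feeding Lemma~\ref{lemma1} with $\phi(t)=t^p$ (so $t^p=o(t^2)$ for $p>2$ and the Luxemburg norm is $\Vert\cdot\Vert_{\ell^p}$) into the classical Baire-category scheme for universal Taylor series, which is exactly what you set up. One minor correction to your closing remark: the valuation condition is not what gives $S_df=f$ with $d\ge n$, since $f=g+P$ is a polynomial and $S_df=f$ for every $d\ge\max(n,\deg f)$; the valuation is needed instead in block constructions $f=\sum_k P_k$ with increasing spectra, such as the one used when polynomials are not dense.
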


A general approach, pertaining to the theory of universality for sequences of operators, shows that establishing results of \emph{generic} universality of various types (e.g., for Taylor partial sums as above, or with respect to boundary radial limits as in \cite{CharpentierEspoullierZarouf2025}) in a Banach or a Fréchet space $X$ of functions analytic in $\D$, in which the set of polynomials is dense, reduces to proving a simultaneous approximation lemma for $X$, such as Lemma \ref{lemma1}. This machinery is by now classical and is developed in detail in \cite{BayartGrosseErdmanNestoridisPapadimitropoulos2008}; see also Section~3 in \cite{CharpentierEspoullierZarouf2025} for a recent and concise exposition. It is also classical that, even if we drop the assumption that the set of polynomials is dense in $X$, simultaneous approximation arguments still allow one to construct universal functions in $X$; however, one may then lose information on the largeness of the set of such functions in the sense of Baire category theorem. Therefore, we may state, as a consequence of Lemma \ref{lemma1} and without further elaboration, the following universality result in $\ell_a^{\phi}$.

\begin{theorem}\label{thm-applic-2}Let $\phi$ be an Orlicz function such that $\phi(t)=o(t^2)$ as $t\to 0$. There exists a function $f=\sum_i a_iz^i$ in $\ell_a^{\phi}$ that satisfies the following two properties:
	\begin{enumerate}\item Let $(r_n)_n$ be a sequence of real numbers in $(0,1)$ that converges to $1$. Given any measurable function $\varphi$ on $\partial \D$, there exists an increasing sequence $(n_l)_l$ such that, for any $w\in \D$ and a.e. $\zeta \in \partial \D$,
		\[
		f(r_{n_l}(\zeta -w)+w) \to \varphi (\zeta)\quad \text{as }l\to \infty;
		\]
		\item Given any measurable function $\varphi$ on $\partial \D$, there exists an increasing sequence $(d_n)_n$ such that, for a.e. $\zeta \in \partial \D$,
		\[
		\sum_{i=0}^{d_n}a_i\zeta^i \to \varphi(\zeta)\quad \text{as }n\to \infty.
		\]
	\end{enumerate}
	If we assume that $\phi$ satisfies the $\Delta_2$ condition, then the set of such functions is dense and $G_{\delta}$ in $\ell_a^{\phi}$.
\end{theorem}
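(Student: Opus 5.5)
The statement to prove is Theorem \ref{thm-applic-2}, and I would deduce it from Lemma \ref{lemma1} by the standard universality machinery of \cite{BayartGrosseErdmanNestoridisPapadimitropoulos2008}. Fix a countable family $(\varphi_j)_j$ of polynomials in $z$ and $\bar z$ with rational coefficients. This family is dense in $C(\partial\D)$, and by Lusin's theorem every measurable $\varphi$ is an a.e. limit of a subsequence of it.

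\textbf{Case where $\phi$ satisfies $\Delta_2$.} Here polynomials are dense in $\ell_a^\phi$, and the $\ell^\phi$-norm dominates $\sup_k|\hat f(k)|$ up to a constant. For $j,s\in\N$, I define $E(j,s)$ as the set of $f\in\ell_a^\phi$ for which there exist $d$ and a compact $K$ with $m(K)>1-1/s$ such that
\[
\sup_{K}\Bigl|\sum_{i\le d}\hat f(i)\zeta^i-\varphi_j(\zeta)\Bigr|<1/s
\]
and, for property (1), also $\sup_{K}|f(r_n(\zeta-w)+w)-\varphi_j(\zeta)|<1/s$ for some $n$, uniformly for $w$ in a fixed compact exhausting set. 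Openness of $E(j,s)$ follows by enlarging $K$ slightly inside an open set and using continuity of $f\mapsto$ partial sum and $f\mapsto f(r(\cdot-w)+w)$ on compacta for fixed $d$ and $r<1$. For density, take a polynomial $Q$ of degree $<v$. Apply Lemma \ref{lemma1} to $\varphi_j-Q$ to obtain $P$ with vanishing coefficients below $v$, $\|P\|_{\ell^\phi}\le\varepsilon$, and $\|P-(\varphi_j-Q)\|_K\le\varepsilon$. Then $Q+P$ is close to $Q$ and its partial sum of order $\deg P$ is $Q+P$ itself, which approximates $\varphi_j$ on $K$. For the radial part, I use that $(Q+P)(r(\zeta-w)+w)\to(Q+P)(\zeta)$ uniformly as $r\to1$, since $Q+P$ is a polynomial. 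Baire's theorem then gives the dense $G_\delta$ set $\bigcap_{j,s}E(j,s)$ (with $\limsup$-type variants in $s$). A Borel--Cantelli argument over $s$ turns convergence in measure on the sets $K$ into a.e. convergence along a subsequence for every measurable $\varphi$; for (1) a diagonal argument over the $w$'s is needed, and the "for any $w$" quantifier is handled through uniformity on compacta of $w$.

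\textbf{General case (no $\Delta_2$).} I would construct $f=\sum_n P_n$ inductively. At step $n$, apply Lemma \ref{lemma1} with $v_n>\deg P_{n-1}$, $\varepsilon_n=2^{-n}$ and target $\varphi_{j(n)}-\sum_{m<n}P_m$, where $(j(n))$ enumerates each index infinitely often. The series converges in norm in $\ell_a^\phi$ because $\sum\varepsilon_n<\infty$. Since the blocks have disjoint spectra, the partial sums $\sum_{i\le \deg P_n}a_i\zeta^i$ equal $\sum_{m\le n}P_m$, which is $\varepsilon_n$-close to $\varphi_{j(n)}$ on $K_n$.

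For radial limits, the tail $\sum_{m>n}P_m(r\zeta')$ has to be controlled. Choosing $r_n$ close enough to $1$ depending on $P_1,\dots,P_n$ and then $v_{n+1}$ large, I would bound the tail at radius $r_n$ by $\sum_{m>n}\sup_k|\hat P_m(k)|\, r_n^{v_m}/(1-r_n)$, made small through the choice of the $v_m$. This tail control is the main obstacle: we only have $\ell^\phi$, and hence $\ell^\infty$, smallness of the blocks rather than uniform smallness on circles, so the lacunary growth of the $v_m$ relative to the chosen radii must compensate. This forces the inductive choice of $v_{n+1}$ after $r_{n_l}$ is fixed.
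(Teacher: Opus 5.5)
Your proposal is correct and follows the route the paper only points to. In the $\Delta_2$ case you run the Baire-category universality machinery of \cite{BayartGrosseErdmanNestoridisPapadimitropoulos2008} with Lemma~\ref{lemma1} as the simultaneous-approximation input; in general you build $f$ by an inductive block construction from Lemma~\ref{lemma1}, with lacunary valuations chosen after each radius to control the tail. Two details need tightening: for $w\neq0$ the points $r(\zeta-w)+w$ have modulus up to $r+(1-r)|w|$, so your tail bound should use $R_n=r_{N_n}+(1-r_{N_n})\rho_n$ with $\rho_n\uparrow1$ and $r_{N_n}$ taken from the given sequence, and your sets $E(j,s)$ should also require $d\ge N$ and $n\ge N$ so that the extracted sequences are genuinely increasing.
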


Note that this theorem exhibits a certain form of optimality, since it is well known that any function in $\ell_a^2$ admits radial boundary limits and has an almost everywhere convergent Fourier series on $\partial \D$. For a sample of results of this type in various settings, we refer to \cite{Bayart2005,BayartGrosseErdmanNestoridisPapadimitropoulos2008,BeiseMuller2016,Charpentier2020,CharpentierEspoullierZarouf2025,Khrushchev2020,Limani2024,Maronikolakis2022}.

The paper is organised as follows. In Section~2, we prove Theorem \ref{prop:lp_norms}. Section~3 is devoted to the proof of Lemma \ref{lemma1}.

\section{Asymptotics of $\ell_a^{\phi}$-norms of powers of finite Blaschke products}\label{section-1}
We shall prove Theorem~\ref{prop:lp_norms} by exploiting asymptotics of the $\ell^{\infty}$-norm of powers of a finite Blaschke product $B$ which is not a monomial. Sharp quantitative asymptotics for $\|B^k\|_{\ell^{\infty}}$ as $k\to \infty$ were obtained by Borichev--Fouchet--Zarouf in \cite[Theorem~1]{BorichevFouchetZarouf2025II} (see Remark \ref{remark-BFZ} below). In the present note, and more specifically for the proof of Assertions~2 and~3 of  Theorem \ref{prop:lp_norms}, we only require the qualitative limit
\[
\|B^k\|_{\ell^{\infty}}\to 0\quad\text{as }k\to \infty.
\]
For the reader's convenience, we therefore provide a direct proof of this fact, relying solely on a standard van der Corput estimate, which already plays a central role in the proof of Theorem~1 in [6].

\begin{lemma}[van der Corput, e.g., Lemma~2.2 in \cite{IvicZetaFunction2003}; special case $G\equiv1$ of Lemma~5 of \cite{BorichevFouchetZarouf2025II}]\label{lem:vdc}Let $F$ be a real $C^2$ function on an interval $[a,b]$ such that either $F''(x)\ge m>0$ for all $x\in[a,b]$ or $F''(x)\le -m<0$ for all $x\in[a,b]$. Then
	\[
	\left|\int_a^b e^{iF(x)}\,dx\right|\le \frac{8}{\sqrt{m}}.
	\]
\end{lemma}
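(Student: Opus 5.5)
We prove the van der Corput estimate by the classical splitting argument. Treat the case $F''\ge m>0$; the other case follows by replacing $F$ with $-F$ and taking complex conjugates. Since $F''>0$, the derivative $F'$ is strictly increasing on $[a,b]$. Hence for any $\delta>0$ the set $E_\delta=\{x\in[a,b]:|F'(x)|<\delta\}$ is a subinterval of $[a,b]$, possibly empty. By the mean value theorem its length is at most $2\delta/m$, because $F'$ increases at rate at least $m$ and can only range over an interval of length $2\delta$ while staying in $(-\delta,\delta)$. On $E_\delta$ we use the trivial bound $|e^{iF}|=1$, so this piece contributes at most $2\delta/m$.

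The complement $[a,b]\setminus E_\delta$ consists of at most two intervals. On each of them $F'$ has constant sign, $|F'|\ge\delta$, and $F'$ is monotone. On such an interval $[c,d]$ we integrate by parts, writing $e^{iF}=\frac{1}{iF'}\,\frac{d}{dx}e^{iF}$. This gives
\[
\int_c^d e^{iF}\,dx=\Bigl[\frac{e^{iF}}{iF'}\Bigr]_c^d-\int_c^d e^{iF}\,\frac{d}{dx}\Bigl(\frac{1}{iF'}\Bigr)dx .
\]
The boundary term is bounded by $2/\delta$. Since $1/F'$ is monotone on $[c,d]$, the total variation of $1/F'$ there equals $|1/F'(d)-1/F'(c)|\le 1/\delta$, so the second integral is at most $1/\delta$. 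Each interval therefore contributes at most $3/\delta$. (The sharper second mean value theorem would give $2/\delta$, but the crude bound is enough.) With at most two intervals, the total is at most $6/\delta$.

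Adding the two contributions gives
\[
\Bigl|\int_a^b e^{iF(x)}\,dx\Bigr|\le \frac{2\delta}{m}+\frac{6}{\delta}.
\]
We choose $\delta=\sqrt{m}$, which yields the bound $8/\sqrt{m}$, exactly the stated constant.

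The only delicate point is the monotonicity of $1/F'$ on each piece. It holds because $F'$ is monotone there and does not change sign, so no cancellation bookkeeping is needed. The degenerate cases, where $E_\delta$ is empty or covers all of $[a,b]$, are absorbed by the same bounds.
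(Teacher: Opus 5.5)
Your proof is correct and complete. The paper does not prove this lemma itself; it cites Ivi\'c (Lemma~2.2), and your argument is the classical van der Corput scheme used there: isolate the interval where $|F'|<\delta$ (of length at most $2\delta/m$), apply a first-derivative bound on the at most two remaining pieces where $1/F'$ is monotone of constant sign, then optimize in $\delta$. The only difference is cosmetic. You derive the first-derivative bound by complex integration by parts, getting $3/\delta$ per piece (in fact $2/\delta$ if the boundary and variation terms are combined). The standard treatment instead applies the second mean value theorem to real and imaginary parts, getting $4/\delta$. That is why your threshold $\delta=\sqrt{m}$ reaches the same constant $8$ that the textbook version obtains with $\delta=2\sqrt{m}$.
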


In order to apply Lemma \ref{lem:vdc}, we introduce the following notation. Let $\psi_{B}(\theta)$ denotes the continuous argument (determined
modulo $2\pi$) of $B(e^{{\rm i}\theta})$, so that
\[
B(e^{{\rm i}\theta})=\exp\left({\rm i}\psi_{B}(\theta)\right).
\]
Let $(\xi_{\ell})_{\ell=1}^{s}$ be the sequence of consecutive
zeros of $\psi_{B}''$ on $[0,2\pi)$, with respective multiplicities
$(N_{\ell}-2)_{\ell=1}^{s}$, $N_{\ell}\ge3$. Thus $0\le\xi_{1}<\xi_{2}<\ldots<\xi_{s}<2\pi$ and, for each $\ell=1,\ldots,s$,
\[
\psi''_{B}(\xi_{\ell})=\ldots=\psi_{B}^{(N_{\ell}-1)}(\xi_{\ell})=0,\qquad\psi_{B}^{(N_{\ell})}(\xi_{\ell})\not=0.
\]
Note that the finitess of the set $(\xi_l)_{l=1}^s$ of zeros of $\psi_B''$ is equivalent to the fact that $B$ is not a constant multiple of a power of $z$.

\begin{prop}\label{prop:linfty_decay}
	Let $B$ be a finite Blaschke product that is not a power of $z$. Then $\|B^k\|_{\ell^{\infty}}\to 0$ as $k\to\infty$.
\end{prop}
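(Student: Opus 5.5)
We want to show $\sup_{j\ge0}|\widehat{B^k}(j)|\to0$ as $k\to\infty$. Write
\[
\widehat{B^k}(j)=\frac{1}{2\pi}\int_0^{2\pi} e^{{\rm i}(k\psi_B(\theta)-j\theta)}\,d\theta ,
\]
so the phase is $F(\theta)=k\psi_B(\theta)-j\theta$ and $F''=k\psi_B''$, which does not depend on $j$. The idea is to cut $[0,2\pi)$ into two kinds of pieces. Near the finitely many zeros $\xi_1,\dots,\xi_s$ of $\psi_B''$ we use the trivial bound. Away from them we use Lemma~\ref{lem:vdc}. Since the zero set is finite (because $B$ is not a multiple of a power of $z$), such a decomposition is available. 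Note also that $\psi_B''$ is real-analytic and not identically zero.

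Fix $\delta>0$ small. Let $E_\delta=\bigcup_{\ell}(\xi_\ell-\delta,\xi_\ell+\delta)$, understood modulo $2\pi$. Its complement in the circle is a finite union of at most $s$ closed arcs $I_1,\dots,I_r$; if $s=0$ there are no zeros and we take the single arc $[0,2\pi]$.

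On each $I_i$ the function $\psi_B''$ is continuous and nonvanishing, so it has constant sign. By compactness there is $m_\delta>0$ with $|\psi_B''|\ge m_\delta$ on $\bigcup_i I_i$. Hence $|F''|\ge k m_\delta$ on each $I_i$, with constant sign, uniformly in $j$. Lemma~\ref{lem:vdc} then gives
\[
\Bigl|\int_{I_i} e^{{\rm i}F}\Bigr|\le \frac{8}{\sqrt{k m_\delta}} .
\]
On $E_\delta$ we bound the integrand by $1$, which contributes at most $2s\delta$. Altogether,
\[
\sup_{j\ge0}|\widehat{B^k}(j)|\le \frac{1}{2\pi}\Bigl(2s\delta+\frac{8s'}{\sqrt{k m_\delta}}\Bigr),
\]
where $s'=\max(s,1)$.

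Given $\varepsilon>0$, first choose $\delta$ so that $s\delta/\pi<\varepsilon/2$. Then choose $k$ large, depending on $m_\delta$, so that the second term is below $\varepsilon/2$. This proves the proposition.

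The main point needing care is justifying the ingredients about $\psi_B$. Differentiating $\log B(e^{{\rm i}\theta})$ gives
\[
\psi_B'(\theta)=\sum_j \frac{1-|\lambda_j|^2}{|e^{{\rm i}\theta}-\lambda_j|^2},
\]
which is a real-analytic function of $\theta$, and hence so is $\psi_B''$. If $\psi_B''$ vanished identically, then $\psi_B'$ would be constant. This constant sum of Poisson kernels would then be a harmonic function of the point on the circle, which forces all $\lambda_j=0$, i.e.\ $B$ is a multiple of a power of $z$. The paper states this equivalence, and we may take it as given. Finiteness of the zero set then follows from analyticity, and the rest of the argument is routine.
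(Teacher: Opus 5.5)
Your proof is correct and follows essentially the same route as the paper: bound the integrand trivially on $\delta$-neighbourhoods of the finitely many zeros of $\psi_B''$, apply Lemma~\ref{lem:vdc} on the complementary arcs, where $|F''|=k|\psi_B''|\ge k m_\delta$ with constant sign uniformly in $j$, and let $k\to\infty$ before $\delta\to0$. The differences are cosmetic. You excise neighbourhoods on the circle, whereas the paper splits $[0,2\pi]$ at $\xi_0=0<\xi_1<\dots<\xi_{s+1}=2\pi$ and trims near every cut point. You also sketch why the zero set is finite, a fact the paper simply asserts; your \emph{harmonic function} phrasing there is loose, and the clean argument is that for $n\ge1$ the $n$-th Fourier coefficient of $\psi_B'$ is $\sum_j\bar\lambda_j^{\,n}$, so $\psi_B'$ constant forces all power sums, hence all $\lambda_j$, to vanish.
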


\begin{proof}
	For $j\ge0$, using $B(e^{i\theta})=e^{i\psi_B(\theta)}$, we write
	\[
	\widehat{B^k}(j)=\frac1{2\pi}\int_0^{2\pi}\exp\bigl(i(k\psi_B(\theta)-j\theta)\bigr)\,d\theta.
	\]
	Let $\xi_1<\cdots<\xi_s$ be the (finite) sequence of zeros of $\psi_B''$ on $[0,2\pi)$, as introduced above, and set $\xi_0=0$, $\xi_{s+1}=2\pi$. Fix $\varepsilon>0$ and split the integral as a sum over the intervals $[\xi_\ell,\xi_{\ell+1}]$, $\ell=0,\dots,s$. On each such interval, decompose
	\begin{multline*}
	\int_{\xi_\ell}^{\xi_{\ell+1}} e^{i(k\psi_B(\theta)-j\theta)}\,d\theta
	=\int_{\xi_\ell}^{\xi_\ell+\varepsilon}\!\! e^{i(k\psi_B(\theta)-j\theta)}\,d\theta \\+\int_{\xi_\ell+\varepsilon}^{\xi_{\ell+1}-\varepsilon}\!\! e^{i(k\psi_B(\theta)-j\theta)}\,d\theta+\int_{\xi_{\ell+1}-\varepsilon}^{\xi_{\ell+1}}\!\! e^{i(k\psi_B(\theta)-j\theta)}\,d\theta.
	\end{multline*}
	The first and third terms have absolute value at most $\varepsilon$. For the middle term, set $F(\theta)=k\psi_B(\theta)-j\theta$, so that $F''(\theta)=k\psi_B''(\theta)$. Since $\psi_B''$ has no zeros on $(\xi_\ell,\xi_{\ell+1})$, its sign is constant there. Moreover
	\[
	m_{\ell,\varepsilon}:=\min_{\theta\in[\xi_\ell+\varepsilon,\xi_{\ell+1}-\varepsilon]}|\psi_B''(\theta)|>0.
	\]
	Applying Lemma~\ref{lem:vdc} on $[\xi_\ell+\varepsilon,\xi_{\ell+1}-\varepsilon]$ yields
	\[
	\left|\int_{\xi_\ell+\varepsilon}^{\xi_{\ell+1}-\varepsilon} e^{iF(\theta)}\,d\theta\right|\le \frac{8}{\sqrt{k\,m_{\ell,\varepsilon}}}.
	\]
	Summing over $\ell=0,\dots,s$ and dividing by $2\pi$, we obtain
	\[
	\big|\widehat{B^k}(j)\big|\le \frac{1}{2\pi}\left(2(s+1)\varepsilon+\frac{8(s+1)}{\sqrt{k\,M_\varepsilon}}\right),\qquad
	M_\varepsilon:=\min_{0\le\ell\le s}m_{\ell,\varepsilon}>0.
	\]
	The right-hand side is independent of $j$, so taking the supremum over $j\ge0$ and then letting $k\to\infty$ shows $\limsup_{k\to\infty}\|B^k\|_{\ell^{\infty}}\le \frac{s+1}{\pi}\,\varepsilon$. Since $\varepsilon>0$ is arbitrary, this proves $\|B^k\|_{\ell^{\infty}}\to0$.
\end{proof}

\begin{remark}\label{remark-BFZ}Proposition~\ref{prop:linfty_decay} provides the qualitative input required in the present note. For context, and in order to highlight the sharp result \cite[Theorem~1]{BorichevFouchetZarouf2025II}, we recall below an asymptotically sharp estimate for $M_k$. This estimate constitutes a quantitative improvement of Proposition~\ref{prop:linfty_decay}; it will not be used in the sequel. Its proof is a technically more involved refinement of the previous one.
	
Given two sequences $(a_k)_k$ and $(b_k)_k$ of positive real numbers, we write $a_k\asymp b_k$ as $k\to \infty$ if there exist two constants $c,C>0$ such that $ca_k \leq b_k \leq Ca_k$ for all sufficiently large $k$.
\end{remark}

\begin{theorem}[Theorem~1 in \cite{BorichevFouchetZarouf2025II}]\label{thm:upper_bd} Let $B$ be
	a finite Blaschke product which is not a power of $z$, and
	let $\psi_{B}$, $(\xi_{\ell})_{\ell=1}^{s}$, $(N_{\ell})_{\ell=1}^{s}$
	be defined as above. Then we have 
	\[
	\|B^{k}\|_{\ell^{\infty}}\asymp k^{-1/N}\quad\text{as }k\to \infty,
	\]
	where $N=N_{B}=\max_{1\le\ell\le s}N_{\ell}$.
\end{theorem}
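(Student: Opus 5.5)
The plan is to analyse the oscillatory integrals
\[
\widehat{B^k}(j)=\frac1{2\pi}\int_0^{2\pi}e^{iF_{k,j}(\theta)}\,d\theta,\qquad F_{k,j}(\theta)=k\psi_B(\theta)-j\theta,
\]
and to localise near the degenerate points $\xi_\ell$, where $\psi_B''$ vanishes to order $N_\ell-2$. We use throughout that $\psi_B'>0$, since $\psi_B'$ is a sum of Poisson kernels, and that $N\ge 3$.

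\emph{Upper bound.} We refine the proof of Proposition~\ref{prop:linfty_decay}, replacing Lemma~\ref{lem:vdc} by the higher-order van der Corput test. That test says that if $|F^{(r)}|\ge\mu>0$ on an interval, then $\bigl|\int e^{iF}\bigr|\le C_r\mu^{-1/r}$ for $r\ge2$.
\begin{enumerate}
\item Fix $\delta>0$ so small that $\psi_B^{(N_\ell)}$ does not vanish on $I_\ell=[\xi_\ell-\delta,\xi_\ell+\delta]$.
\item On $I_\ell$ we have $F_{k,j}^{(N_\ell)}=k\psi_B^{(N_\ell)}$, since $N_\ell\ge3$ kills the linear term $j\theta$. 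This derivative is bounded below by $ck$, so the contribution of $I_\ell$ is $O(k^{-1/N_\ell})$.
\item On the complement of $\bigcup_\ell I_\ell$ we have $|\psi_B''|\ge c_\delta$, so Lemma~\ref{lem:vdc} gives $O(k^{-1/2})$ on each of finitely many intervals.
\end{enumerate}
None of these bounds depends on $j$. Since $k^{-1/2}\le k^{-1/N}$, this yields $\|B^k\|_{\ell^\infty}\lesssim k^{-1/N}$.

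\emph{Lower bound.} Fix $\ell$ with $N_\ell=N$ and set $a_\ell=\psi_B^{(N)}(\xi_\ell)/N!\neq0$. Take
\[
j=\bigl\lfloor k\psi_B'(\xi_\ell)+t\,k^{1-1/N}\bigr\rfloor,
\]
with $t$ a real parameter. Substituting $\theta=\xi_\ell+k^{-1/N}u$ and Taylor expanding, the phase becomes
\[
\text{const}+a_\ell u^N-tu+O(k^{-1/N}u)+O(k^{-1/N}u^{N+1}).
\]
Hence the contribution of a window $|u|\le R$ equals $k^{-1/N}e^{i\omega_{\ell}(k,j)}\bigl(G_{a_\ell}(t)+o(1)\bigr)$, where
\[
G_a(t)=\int_{\R}e^{i(au^N-tu)}\,du
\]
is an Airy-type integral. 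This integral converges and is not identically zero; for example, $G_a(0)$ is an explicit nonzero Gamma-function expression. The tails $R\le|u|\le\delta k^{1/N}$ are controlled uniformly by the $N$-th derivative test on dyadic pieces, or by integration by parts, since there $|F'|\gtrsim k^{1/N}|u|^{N-1}$. The rest of the circle contains only nondegenerate stationary points, where $k\psi_B'=j$ away from the $\xi$'s, and these contribute $O(k^{-1/2})=o(k^{-1/N})$. It also contains degenerate points of lower order, contributing $o(k^{-1/N})$.

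\emph{Main obstacle.} The hard step is possible cancellation between several degenerate points $\xi_{\ell'}$ that share both the maximal order $N$ and the same value $\psi_B'(\xi_{\ell'})=\psi_B'(\xi_\ell)$. Their leading terms add up to
\[
k^{-1/N}\sum_{\ell'}e^{i\omega_{\ell'}}G_{a_{\ell'}}(t),
\]
and this sum could vanish for particular $k$. To rule out persistent cancellation, I would average $|\cdot|^2$ over $t$ in a fixed interval $[t_0,t_0+1]$. Changing $t$ changes $j$ by about $k^{1-1/N}\,\Delta t$, which multiplies the $\ell'$-term by $e^{-i\Delta j\,\xi_{\ell'}}$. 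Because the $\xi_{\ell'}$ are distinct modulo $2\pi$ and $k^{1-1/N}\to\infty$, the cross terms oscillate rapidly and average to $o(1)$ by a Riemann--Lebesgue type argument. The mean of $|\cdot|^2$ is then $\ge\int|G_{a_\ell}|^2\,dt>0$ for a suitable $t_0$. So for each large $k$ some admissible $j$ gives $|\widehat{B^k}(j)|\gtrsim k^{-1/N}$, which completes $\|B^k\|_{\ell^\infty}\asymp k^{-1/N}$.
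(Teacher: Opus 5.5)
Your upper bound is the refinement of the proof of Proposition~\ref{prop:linfty_decay} that the paper alludes to, and it is correct: the $N_\ell$-th derivative test on $I_\ell$ and Lemma~\ref{lem:vdc} on the complementary intervals give $O(k^{-1/N})$ uniformly in $j$. (The paper does not prove Theorem~\ref{thm:upper_bd}; it quotes it from \cite{BorichevFouchetZarouf2025II}.)

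The lower bound, however, fails at the choice of $j$. With $\theta=\xi_\ell+k^{-1/N}u$, the linear part of the phase is $(k\psi_B'(\xi_\ell)-j)\,k^{-1/N}u$. To obtain $-tu$ you therefore need $j-k\psi_B'(\xi_\ell)=t\,k^{1/N}$, not $t\,k^{1-1/N}$. With your parametrisation the linear coefficient is $-t\,k^{1-2/N}$, which is unbounded for $t\neq0$ since $N\ge3$, so the rescaled integral does not tend to $G_{a_\ell}(t)$.

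In fact, for fixed $t\neq0$ the only stationary points near $\xi_\ell$ are nondegenerate ones at distance $h\asymp k^{-1/(N(N-1))}$, where $|\psi_B''|\asymp h^{N-2}$. The local contribution is then
\[
O\bigl((kh^{N-2})^{-1/2}\bigr)=O\bigl(k^{-1/N}k^{-(N-2)^2/(2N(N-1))}\bigr)=o(k^{-1/N}).
\]
Only the $O(k^{1/N})$ indices with $|j-k\psi_B'(\xi_\ell)|\lesssim k^{1/N}$ actually see the degenerate point. They form a vanishing proportion of your window of $\asymp k^{1-1/N}$ integers, so your mean of $|\cdot|^2$ is $o(k^{-2/N})$ and yields no lower bound. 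Your own error terms (the $O(k^{-1/N}u)$ rounding term, and $|F'|\gtrsim k^{1/N}|u|^{N-1}$ on the tails) are consistent only with the $k^{1/N}$ scaling, so this is a bookkeeping slip. But it recurs everywhere the scaling enters, including the averaging step.

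After replacing $k^{1-1/N}$ by $k^{1/N}$ throughout, your scheme works:
\begin{itemize}
\item The set $J$ of integers $j$ with $t_j=(j-k\psi_B'(\xi_\ell))k^{-1/N}\in[t_0,t_0+1]$ has $\asymp k^{1/N}\to\infty$ elements.
\item All cluster points are evaluated at the same $t_j$, precisely because they share the value $\psi_B'(\xi_\ell)$.
\item By Abel summation, each cross term $\frac{1}{\# J}\sum_{j\in J}e^{-ij(\xi_{\ell'}-\xi_{\ell''})}G_{a_{\ell'}}(t_j)\overline{G_{a_{\ell''}}(t_j)}$ is $O(k^{-1/N})$, since $e^{i(\xi_{\ell'}-\xi_{\ell''})}\neq1$ and the $G_a$ are smooth.
\item The diagonal Riemann sums tend to $\int_{t_0}^{t_0+1}\sum_{\ell'}|G_{a_{\ell'}}|^2\,dt>0$; take $t_0=-1/2$ and use $G_{a_\ell}(0)\neq0$.
\end{itemize}
The $o(1)$ in the local expansion must be uniform in $t\in[t_0,t_0+1]$, which your window/tail splitting provides.

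One further point should be made explicit. Near a point $\xi_{\ell''}$ of maximal order $N$ with $\psi_B'(\xi_{\ell''})\neq\psi_B'(\xi_\ell)$, the $N$-th derivative test gives only $O(k^{-1/N})$, not $o(k^{-1/N})$. There, and on every region free of stationary points, use the first-derivative test instead: $|F'|\gtrsim k$ with $F'$ monotone on finitely many pieces gives $O(k^{-1})$.
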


\medskip

Let us now derive Theorem \ref{prop:lp_norms} from Proposition~\ref{prop:linfty_decay}.

\begin{proof}[Proof of Theorem \ref{prop:lp_norms}]
	Let $L^{2}(\partial\mathbb{D})$ be the usual Hilbert space of all measurable Lebesgue square integrable functions on $\partial\mathbb{D}$, equipped with the standard inner product
	\[
	\left\langle f,g\right\rangle :=\int_{-\pi}^{\pi}f(e^{i\theta})\overline{g(e^{i\theta})}\frac{{\rm d}\theta}{2\pi}=\sum_{j\geq0}\hat{f}(j)\overline{\hat{g}(j)}.
	\]
	For $p=2$ and any $k\geq 0$, it is clear that 
	\[
	\Vert B^{k}\Vert_{\ell^{2}}^{2}=\left\langle B^{k},B^{k}\right\rangle =1,
	\]
	which gives the first assertion when $\phi(t)=t^2$. Assume now that $\phi\asymp t^2$ as $t\to0$. It is clear that $\ell_a^{\phi}$ and $\ell_a^2$ coincide. Further, we shall see that there exist two constants $k_{\phi},K_{\phi}>0$ such that
	\begin{equation}\label{eqnormsequiv}
		k_{\phi}\Vert f \Vert _{\ell^{2}} \le \max\left(\Vert f \Vert_{\ell^{\phi}},\Vert f \Vert _{\ell^{\infty}}\right)\le K_{\phi}\Vert f \Vert _{\ell^{2}},\quad f\in \ell_a^{2}(=\ell_a^{\phi}).
	\end{equation}
	Indeed, the assumption on $\phi$ allows us to choose $t_0\in (0,1)$ and constants $k,K>0$ such that $kt^2\le\phi(t)\le Kt^2$ for $t\in[0,t_0]$. Fix $f\in \ell_a^2$ and $\lambda  \geq \frac{\max\left(\Vert f \Vert_{\ell^{\phi}},\Vert f \Vert _{\ell^{\infty}}\right)}{t_0}$. Then
	\[
	\frac{k}{\lambda^2}\Vert f \Vert _{\ell^2}^2 \le \sum_{j\ge0}\phi\left(\frac{|\widehat{f}(j)|}{\lambda}\right)\le 1.
	\]
	Thus $\lambda \geq \sqrt{k}\Vert f \Vert _{\ell^2}$ hence the first inequality of \eqref{eqnormsequiv}, setting $k_{\phi}=\sqrt{k}t_0$. For the other inequality, fix $f\in \ell_a^2$ and $\lambda  \geq \frac{\sqrt{K}\Vert f \Vert_{\ell^2}}{t_0}$. We may assume that $K\geq 1$. Since $\Vert \cdot \Vert _{\ell^2}\geq \Vert \cdot \Vert _{\ell^{\infty}}$, we have $\Vert f/\lambda \Vert _{\ell^{\infty}}\leq t_0$, hence
	\[
	\sum_{j\ge0}\phi\left(\frac{|\widehat{f}(j)|}{\lambda}\right) \leq K \frac{\Vert f \Vert _{\ell^2}}{\lambda ^2} \leq \Vert \frac{f}{\Vert f \Vert _{\ell^2}} t_0\Vert ^2 \leq 1.
	\]
	It follows that $\Vert f \Vert _{\ell^{\phi}} \leq \frac{\sqrt{K}}{t_0}\Vert f \Vert _{\ell^2}$, which gives the second inequality, with $K_{\phi}=\frac{\sqrt{K}}{t_0}$.

	Now, using that $\Vert B^k \Vert _{\ell^2}=1$ for any $k$, replacing $f$ by $B^k$ in \eqref{eqnormsequiv} and applying Proposition~\ref{prop:linfty_decay} yield $k_{\phi}\le\|B^k\|_{\ell^{\phi}}\le K_{\phi}$ for all large $k$. Adjusting the constants for the finitely many remaining values of $k$ gives Assertion~1.

	\medskip
	
	We turn to the proof of Assertion~2. Let $\phi$ be an Orlicz function such that $\phi(t)=o(t^2)$ as $t\to 0$ and let us fix $\varepsilon >0$. For $k\geq 1$, we have
	\begin{align}
		\sum _{j\geq 0}\phi\left(\frac{|\widehat{B^k}(j)|}{\varepsilon}\right) & \notag =\sum_{j\geq0}\left(\frac{|\widehat{B^k}(j)|}{\varepsilon}\right)^2\frac{\phi\left(\frac{|\widehat{B^k}(j)|}{\varepsilon}\right)}{\left(\frac{|\widehat{B^k}(j)|}{\varepsilon}\right)^2}\\
		& \leq\frac{\sup_{0< x\leq \sup_j|\widehat{B^k}(j)|}\psi(\frac{x}{\varepsilon})}{\varepsilon^2} ,	\label{eq2}
	\end{align}
	where $\psi(t)=\phi(t)/t^2$. Since $\psi(t)\to 0$ as $t\to 0$ by assumption, Proposition~\ref{prop:linfty_decay} implies that the right hand-side of $\eqref{eq2}$ goes to $0$ as $k\to \infty$, hence $\Vert B^k \Vert _{\ell_a^{\phi}} \leq \varepsilon$. This gives the second assertion of Theorem \ref{prop:lp_norms}, $\varepsilon$ being arbitrary.
	
	\medskip
	
	The proof of Assertion~3 is very similar. Assume that $\phi$ satisfies $t^2=o(\phi(t))$ as $t\to 0$ and fix $\lambda >0$. Then, with $\psi(t)=\phi(t)/t^2$ and proceeding as above, we get
	\begin{equation}\label{eq3}
		\sum _{j\geq 0}\phi\left(\frac{|\widehat{B^k}(j)|}{\lambda}\right)  \geq \frac{\inf_{0< x\leq \sup_j|\widehat{B^k}(j)|}\psi(\frac{x}{\lambda})}{\lambda^2}.
	\end{equation}
	The assumption on $\phi$ is now equivalent to $\psi(t)\to +\infty$ as $t\to 0$, so Proposition~\ref{prop:linfty_decay} implies that the right hand-side of \eqref{eq3} tends to $\infty$ as $k\to \infty$. Since $\lambda$ is arbitrary, we get the conclusion, by definition of the Luxemburg norm.
\end{proof}

\section{Proof of Lemma \ref{lemma1}}

Our alternative proof of Lemma \ref{lemma1} will be based on the following corollary of Theorem \ref{prop:lp_norms}.

\begin{lemma}\label{lemma2}Let $\phi$ be an Orlicz function such that $\phi(t)=o(t^2)$ as $t\to 0$. For any $\varepsilon>0$, any complex polynomial $P$ with $P(0)=0$ and any finite Blaschke product $B$ which is not a power of $z$, there exists $n_0\in\mathbb{N}$ such that, for any $n\geq n_0$,
	\[
	\Vert P\circ B^{n}\Vert_{\ell^{\phi}}\leq\varepsilon.
	\]
\end{lemma}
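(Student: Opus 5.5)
Write $P(z)=\sum_{m=1}^{d} c_m z^m$, which is possible since $P(0)=0$. Then
\[
P\circ B^n=\sum_{m=1}^d c_m B^{nm}.
\]
The plan is to use the triangle inequality for the Luxemburg norm (it is a genuine norm on $\ell_a^{\phi}$):
\[
\Vert P\circ B^n\Vert_{\ell^{\phi}}\le \sum_{m=1}^d |c_m|\,\Vert B^{nm}\Vert_{\ell^{\phi}} .
\]
Each $B^{nm}$, for $1\le m\le d$, is a power of the fixed finite Blaschke product $B$, with exponent $nm\ge n$. Since $B$ is not a power of $z$ and $\phi(t)=o(t^2)$, Assertion~2 of Theorem~\ref{prop:lp_norms} gives $\Vert B^k\Vert_{\ell^{\phi}}\to 0$ as $k\to\infty$.

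Set $C=\sum_{m=1}^d|c_m|$. If $C=0$ there is nothing to prove. Otherwise, by Assertion~2 choose $k_0$ such that $\Vert B^k\Vert_{\ell^{\phi}}\le \varepsilon/C$ for all $k\ge k_0$, and put $n_0=k_0$. For $n\ge n_0$ and every $1\le m\le d$ we have $nm\ge k_0$, so each term is at most $|c_m|\varepsilon/C$. Summing over $m$ gives $\Vert P\circ B^n\Vert_{\ell^{\phi}}\le\varepsilon$.

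The hypothesis $P(0)=0$ is essential here: a constant term would contribute $|c_0|\,\Vert 1\Vert_{\ell^{\phi}}>0$, which does not decay. There is no real obstacle. The only point to check is that $\Vert\cdot\Vert_{\ell^{\phi}}$ is a norm, which is standard since $\phi$ is convex. In particular the uniformity in $m$ comes for free, because $d$ is finite and $nm\ge n$.
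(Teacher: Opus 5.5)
Your proposal is correct and follows the paper's proof: expand $P\circ B^n=\sum_{m=1}^d c_m B^{nm}$, apply the triangle inequality for the Luxemburg norm, and use Assertion~2 of Theorem~\ref{prop:lp_norms} for each power $B^{nm}$. The paper takes the limit separately for each fixed $m$; you instead choose a single threshold $k_0$ valid for all $m$, which is the same argument written out slightly more explicitly.
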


\begin{proof}Let $\varepsilon$, $P$ and $B$ be as in the statement. Let us write $P(z)=\sum_{k=1}^{d}a_{k}z^{k}$. We have $P\circ B^{n}=\sum_{k=1}^{d}a_{k}B^{nk}$, then
	\[
	\Vert P\circ B^{n}\Vert_{\ell^{\phi}}\leq\sum_{k=1}^{d}|a_{k}|\Vert B^{nk}\Vert_{\ell^{\phi}}.
	\]
	By Theorem \ref{prop:lp_norms} (2), we have $\lim_{n\to\infty}\Vert B^{nk}\Vert_{\ell^{\phi}}=0$ for all $k=1\dots d.$
	In particular
	\[
	\lim_{n\to\infty}\Vert P\circ B^{n}\Vert_{\ell^{\phi}}=0.
	\]
\end{proof}

We now turn to the proof of Lemma \ref{lemma1}.

\begin{proof}[Proof of Lemma \ref{lemma1}]Let $I$ be an arc in $\partial \D$, with $m(I)> 1- \varepsilon$. By Mergelyan's theorem, there exists a polynomial $R$ such that $\Vert R - \varphi \Vert _I \leq \varepsilon$. Let also $Q$ be a polynomial, with $\text{val}(Q)\geq v$, such that
	\[
	\Vert Q- 1\Vert _I \leq \varepsilon.
	\]
	Let $B$ be a finite Blaschke product vanishing at $0$, which is not a power of $z$, and let $n_0\in \N$ be given by Lemma \ref{lemma2}, applied to $\frac{\varepsilon}{\max(\Vert R \Vert _{\ell^1},\Vert R \Vert _I)}$, $Q$ and $B$. Setting $B_0=B^{n_0}$ and $g= R.(Q\circ B_{0})$, the triangular inequality and the invariance by translation of the $\ell^{\phi}$ norm yield $\Vert g \Vert _{\ell^{\phi}} \leq \Vert R \Vert _{\ell^1}\Vert Q\circ B_0\Vert_{\ell^{\phi}}\leq \varepsilon$. Moreover, since $B_0(0)=0$, $0$ is a zero of order at least $v$ of $g$. Let now $K=B_0^{-1}(I)$.
	Since the function $B_0$ is inner and $B_0(0)=0$, it preserves the normalized Lebesgue measure. In particular $m(K)=m(I)>1-\varepsilon$, hence $m(K\cap I)\ge 1-2\varepsilon$. Then, for any $z\in K\cap I$,
	\begin{eqnarray*}
		|R(z) Q(B_0(z)) - \varphi(z)| & \leq & |R(z)-\varphi(z)| + |R(z)||Q(B_0(z))-1| \\
		\leq 2\varepsilon.
	\end{eqnarray*}
	Observe that $m(K\cap I)\geq 1-2\varepsilon$. Since $\varepsilon$ is arbitrary, the result follows whenever we have checked that $g$ can be suitably approximated by a polynomial.
	
	Using that the dilates $g_r := z \mapsto g(rz)$ of $g$ converge to $g$ in $\ell_a^{\phi}$ and uniformly on $\overline{\D}$ as $r\to 1$, one can find $r$ close enough to $1$, such that the function $g_r$, analytic in a neighbourhood of $\overline{\D}$, satisfies the conclusion of the lemma, except it is not a polynomial. To finish, for $n\in \N$, let $S_n$ denotes the operator from $H(\D)$ to itself that maps $f=\sum_k a_kz^k$ to its $n$-th Taylor partial sum $\sum_{k=0}^na_kz^k$. Then $\Vert S_n (g_r) \Vert _{\ell^{\phi}} \leq \Vert g \Vert _{\ell^{\phi}}$ and $S_n(g_r)\to g_r$ as $n\to \infty$ uniformly on $\overline{\D}$. Thus we can set $P=S_n(g_r)$ for some $n$ large enough.
\end{proof}

\bibliographystyle{amsplain}
\bibliography{refs-CEZ-2025-09-fix}

\providecommand{\bysame}{\leavevmode\hbox to3em{\hrulefill}\thinspace}
\providecommand{\MR}{\relax\ifhmode\unskip\space\fi MR }
\providecommand{\MRhref}[2]{%
  \href{http://www.ams.org/mathscinet-getitem?mr=#1}{#2}
}
\providecommand{\href}[2]{#2}
\begin{thebibliography}{10}

\bibitem{Bayart2005}
F.~Bayart, \emph{Universal radial limits of holomorphic functions}, Glas. Math.
  J. \textbf{47} (2005), no.~2, 261--267.

\bibitem{BayartGrosseErdmanNestoridisPapadimitropoulos2008}
F.~Bayart, K.-G. Grosse-Erdmann, V.~Nestoridis, and C.~Papadimitropoulos,
  \emph{Abstract theory of universal series and applications}, Proc. London
  Math. Soc. \textbf{(3) 96} (2008), 417--463.

\bibitem{BeiseMuller2016}
H.~P. Beise and J.~M\"uller, \emph{Generic boundary behaviour of {T}aylor
  series in {H}ardy and {B}ergman spaces}, Math. Z. \textbf{284} (2016),
  1185--1197.

\bibitem{BlyudzeShimorin1996}
M.Y. Blyudze and S.M. Shimorin, \emph{Estimates of the norms of powers of
  functions in certain {B}anach spaces}, J. Math. Sci. \textbf{80} (1996),
  no.~4, 1880--1891.

\bibitem{BorichevFouchetZarouf2025I}
A.~Borichev, K.~Fouchet, and R.~Zarouf, \emph{On the {F}ourier coefficients of
  powers of a {B}laschke factor and strongly annular functions}, Constr.
  Approx. \textbf{60} (2024), no.~1, 33--86.

\bibitem{BorichevFouchetZarouf2025II}
\bysame, \emph{On the {F}ourier coefficients of powers of a finite {B}laschke
  product}, Int. Math. Res. Not. IMRN (2024), no.~20, 13255--13280.

\bibitem{Charpentier2020}
S.~Charpentier, \emph{Holomorphic functions with universal boundary behaviour},
  J. Approx. Theory \textbf{254} (2020), 105391.

\bibitem{CharpentierEspoullierZarouf2025}
S.~Charpentier, N.~Espoullier, and R.~Zarouf, \emph{Bloch functions with wild
  boundary behavior in {$\Bbb C^N$}}, Bull. Lond. Math. Soc. \textbf{57}
  (2025), no.~6, 1691--1707.

\bibitem{IvicZetaFunction2003}
A.~Ivic, \emph{The {R}iemann {Z}eta-{F}unction: {T}heory and {A}pplications},
  Dover Publications, Mineola, NY, 2003, Reprint of the 1985 original; John
  Wiley \& Sons, New York.

\bibitem{Kahane1956}
J.-P. Kahane, \emph{{S}ur certaines classes de series de {F}ourier absolument
  convergentes}, J. Math. Pures Appl. \textbf{9} (1956), 249--259.

\bibitem{KahaneNestoridis2000}
J.-P. Kahane and V.~Nestoridis, \emph{S\'eries de {T}aylor et s\'eries
  trigonom\'etriques universelles au sens de {M}enchoff}, J. Math. Pures Appl.
  \textbf{79} (2000), 855--862.

\bibitem{KahaneKatznelson1971}
J.P. Kahane and Y.~Katznelson, \emph{Sur le comportement radial des fonctions
  analytiques}, C. R. Acad. Sci. Paris S\'{e}r. A-B \textbf{272} (1971),
  A718--A719.

\bibitem{Khrushchev1978}
S.~V. Khrushchev, \emph{The problem of simultaneous approximation and of
  removal of the singularities of {C}auchy type integrals}, Trudy Mat. Inst.
  Steklov. \textbf{130} (1978), 124--195, 223.

\bibitem{Khrushchev2020}
\bysame, \emph{A continuous function with universal {F}ourier series on a given
  closed set of {L}ebesgue measure zero}, J. Approx. Theory \textbf{252}
  (2020), 105361, 6.

\bibitem{LLQRpreprint2025}
P.~Lef\`evre, D.~Li, H.~Queff\'elec, and L.~Rodr\'iguez-Piazza, \emph{On some
  questions about composition operators on weighted {H}ardy spaces}, Pure and
  Applied Functional Analysis, to appear; see arXiv:2311.01062.

\bibitem{LLQR2025}
\bysame, \emph{Characterization of weighted {H}ardy spaces on which all
  composition operators are bounded}, Analysis \& PDE \textbf{18} (2025),
  no.~8, 1921--1954.

\bibitem{Limani2024}
A.~Limani, \emph{Asymptotic polynomial approximation in the {B}loch space},
  Preprint, arXiv:2403.08723.

\bibitem{LindenstraussTzafririBookI}
J.~Lindenstrauss and L.~Tzafriri, \emph{Classical {B}anach spaces. {I}},
  Ergebnisse der Mathematik und ihrer Grenzgebiete [Results in Mathematics and
  Related Areas], Band 92, Springer-Verlag, Berlin-New York, 1977, Sequence
  spaces.

\bibitem{Maronikolakis2022}
K.~Maronikolakis, \emph{Universal radial approximation in spaces of analytic
  functions}, J. Math. Anal. Appl. \textbf{512} (2022), no.~1, 126102.

\bibitem{Meyer2021}
Y.~Meyer, \emph{Crystalline measures and mean-periodic functions}, rans. R.
  Norw. Soc. Sci. Lett. \textbf{2} (2022), 5--30.

\bibitem{SzehrZarouf2020}
O.~Szehr and R.~Zarouf, \emph{{$l_p$}-norms of {F}ourier coefficients of powers
  of a {B}laschke factor}, J. Anal. Math. \textbf{140} (2020), no.~1, 1--30.

\end{thebibliography}

\end{document}